\theoremstyle{definition}
\newtheorem{theorem}{Theorem}[section]
\newtheorem{lemma}[theorem]{Lemma}
\newtheorem{definition}[theorem]{Definition}
\theoremstyle{remark}
\numberwithin{equation}{section}
\newcommand{\nint}{\int_{-\infty}^{+\infty}}
\newcommand{\var}{\varepsilon}
\newcommand{\m}{$ - $}
\newcommand{\p}{$ + $}
\begin{document}
%\linenumbers
\title[Uniqueness of standing-waves for pure-power combinations]
{Uniqueness of standing-waves for a
non-linear Schr\"odinger equation with three pure-power
combinations in dimension one}
%\title{}

%    Only \author and \address are required; other information is
%    optional.  Remove any unused author tags.

%    author one information
% \author[short version for running head]{name for top of paper}

\author[D.~Garrisi]{Daniele Garrisi}
\address{}
\curraddr{Room 5S167, Building 5\\
Inha University\\
Namgu Inharo 100\\
Incheon 22212 South Korea}
\email{daniele.garrisi@inha.ac.kr}
\thanks{The first author was supported by INHA UNIVERSITY Research
Grant through the project number 51747-01 titled
"Stability in non-linear evolution equations".}

%    author two information
\author[V.~Georgiev]{Vladimir Georgiev}
\address{Vladimir Georgiev\\
Department of Mathematics,
University of Pisa,
Largo Bruno Pontecorvo 5
I - 56127 Pisa,
Italy\\
and \\
Faculty of Science and Engineering, Waseda University,
3-4-1, Okubo, Shinjuku-ku, Tokyo 169-8555 Japan}
\curraddr{}
\email{georgiev@dm.unipi.it}
\thanks{The second author was supported in part by  INDAM, 
GNAMPA - Gruppo Nazionale per l'Analisi Matematica, la 
"Probabilit\`a e le loro Applicazioni" and by Institute of 
Mathematics and Informatics, Bulgarian Academy of Sciences and Top 
Global University Project, Waseda University.}
%\subjclass[2000]{Primary }
%    The 2010 edition of the Mathematics Subject Classification is
%    now available.  If you are citing a classification from the
%    new scheme, use the following input coding instead.
\subjclass[2010]{Primary: 35Q55; Secondary: 47J35.}
\date{2017, July 26}
\begin{abstract}
We show that symmetric and positive profiles of ground-state 
standing-wave of the non-linear Schr\"odinger equation are
non-degenerate and unique up to a translation of the argument and 
multiplication by complex numbers in the unit sphere. 
The non-linear term is a combination of two
or three pure-powers. The class of non-linearities satisfying the 
mentioned properties can be extended beyond two or three power 
combinations. Specifically, it is sufficient that an Euler 
differential inequality is satisfied and that a certain auxiliary 
function is such that the first local maximum is also an absolute 
maximum.
\end{abstract}
\maketitle
\section{The role of the uniqueness and 
non-degeneracy in the stability}
A standing-wave is a function defined as 
$ \phi(t,x) := e^{i\omega t} u(x) $,
where $ \omega $ is a real number, 
$ u $ is a complex-valued function in 
$ H^1 (\mathbb{R};\mathbb{C}) $ and $ \phi $ is a solution to
the non-linear Schr\"odinger equation
\begin{equation}
\label{eq.2017-07-27.10:05}
i\partial_t \phi(t,x) + \partial_{xx}^2
\phi(t,x) - F'(\phi(t,x)) = 0,
\end{equation}
The profile of a standing-wave is just $ R(x) := |u(x)| $. The
literature is concerned with the existence and the
stability of standing-waves whose profiles obey prescribed 
variational characterizations. The profiles we are interested
in are minima of the energy functional
\begin{equation*}
E(u) := \frac{1}{2}\nint |u'(x)|^2 dx + \nint F(u(x)) dx
\end{equation*}
on the constrained defined as $ S(\lambda) :=
\{u\in H^1 (\mathbb{R})\mid \|u\|_{L^2}^2 = \lambda\} $ where 
$ \lambda > 0 $. As one can easily check, if $ u $ is a minimum of 
the energy functional, then $ v(x) := zu(x + y) $ belongs to the 
same constraint and has the same energy. Therefore, it is 
a new minimum, for every choice of $ z $ in $ S^1 $ 
(complex numbers in the unit sphere) and $ y $ in $ \mathbb{R} $. 
Then, $ u $ clearly a degenerate critical point of $ E $ on the 
constraint $ S(\lambda) $, as the transformations defined above 
show that $ u $ is the limit of a sequence of critical points. 
Therefore, both uniqueness 
and non-degeneracy need to be defined. We introduce the notation
\begin{equation*}
%\label{eq.2017-08-12}
\mathcal{G}_\lambda := \{u\in S(\lambda)\mid E(u) = 
\inf_{S(\lambda)} E\}.
\end{equation*}
The set we defined is sometimes called \textsl{ground state}, 
as in \cite{BBBM10}, even if the literature occasionally adopts 
this term to address more generally positive solutions to 
semi-linear elliptic equations, \cite{DdPG13}.
We denote by $ H^1 _r (\mathbb{R}) $ the set of real-valued
$ H^1 $ functions
which are radially symmetric with respect to the origin. 
\begin{definition}[Uniqueness and non-degeneracy]
A pair $ (F,\lambda) $ satisfies the uniqueness property if 
given $ u $ and $ v $ in $ \mathcal{G}_\lambda $,
there exists $ (z,y) $ in $ S^1\times\mathbb{R} $ such that
$ u(x) = zv(x + y) $ for every $ x $ in $ \mathbb{R} $. It satisfies
the non-degeneracy property if the function $ E_r $ obtained as a 
restriction of $ E $ on $ S(\lambda)\cap H^1 _r (\mathbb{R}) $ 
has non-degenerate minima.
\end{definition}
Uniqueness and non-degeneracy are not interesting features of the 
energy functional, but also play a role in the orbital stability of
standing-wave solutions to \eqref{eq.2017-07-27.10:05}. We say that 
\eqref{eq.2017-07-27.10:05} is globally well-posed in 
$ H^1 (\mathbb{R};\mathbb{C}) $
if, given $ u_0 $ in $ H^1 (\mathbb{R};\mathbb{C}) $, there exists 
a solution
\[
\phi\colon [0,+\infty)\times \mathbb{R}\to\mathbb{C}
\]
such that $ \phi(0,x) = u_0 (x) $ and the map 
\begin{equation*}
U\colon [0,+\infty)\to H^1 (\mathbb{R};\mathbb{C})\to 
H^1 (\mathbb{R};\mathbb{C}),\quad U_t (u_0) := 
\phi(t,\cdot)
\end{equation*}
is of class
\[
C\sp 1 \big([0,+\infty);H^{-1}(\mathbb{R};\mathbb{C})\big)\cap 
C\big([0,+\infty);H\sp 1 (\mathbb{R};\mathbb{C})\big).
\]
On the set $ H^1 (\mathbb{R};\mathbb{C}) $ we consider the metric induced
by the scalar product
\[
(u,w)_{H^1 (\mathbb{R};\mathbb{C})} :=
\text{Re}\int_{\mathbb{R}} u(x)\overline{w}(x) dx +
\text{Re}\int_{\mathbb{R}} u'(x)\cdot \overline{w'(x)} dx
\]
and denote it by $ d $.
\begin{definition}[Stability]
A subset $ \mathcal{G} $ of $ H\sp 1 (\mathbb{R};\mathbb{C}) $ is 
said stable if for every $ \delta > 0 $ there exists 
$ \varepsilon > 0 $ such 
that $ d(u_0,\mathcal{G}) < \varepsilon\implies 
d(U_t (u_0),\mathcal{G}) < \delta 
$ for every $ t\geq 0 $.
\end{definition}
Given $ u $ in 
$ \mathcal{G}_\lambda $, we define
\begin{equation}
\label{eq.2017-07-27.10:49}
\mathcal{G}_\lambda (u) := 
\{zu(\cdot + y)\mid (z,y)\in S^1\times\mathbb{R}\}.
\end{equation}
In general, if $ u $ is a minimum of $ E $, then 
$ \mathcal{G}_\lambda (u) $ is a subset of the ground state 
$ \mathcal{G}_\lambda $. 
The stability of these two sets is object of interest of the 
literature since the work of T.~Cazenave and P.~L.~Lions,
\cite{CL82}, where pure-powers are considered. Results of 
stability of the ground-state have been extended to
more general non-linearities, as in \cite{BBGM07,Shi14}. We
also mention other references which target the stability 
of the ground-state in other evolutionary equation, as 
multi-constraint non-linear Schr\"odinger systems, 
\cite{Iko14,Bha15,LNW16}, coupled non-linear Schr\"odinger systems
(NLS + NLS), \cite{Oht96,NW11,GJ16}, coupled non-linear 
Schr\"odinger and Kortweg-de Vries equation (NLS + KdV), 
\cite{AA03}, non-linear Klein-Gordon 
equation (NLKG), \cite{BBBM10}, (NLKG + NLKG), \cite{Gar12}.
In most cases, the stability of the ground-state is a consequence 
of the Concentration-Compactness Lemma, \cite{Lio84a,Lio84b}. Coupled 
equations present some additional difficulties (rescalings
do not work) but they can be worked 
around with \textsl{ad hoc} rescalings, as in \cite{AA03} or with
inequalities obtained through symmetric rearrangements for more general 
non-linearities, as \cite[Lemma~3.1]{Gar12} and \cite[Proposition~1.4]{Bye00},
or through the \textsl{coupled rearrangement} defined in \cite[\S2.2]{Shi13}. 
%Finally, we
%mention a subsequent work, \cite{AB13}, where a weaker 
%version of \cite[Lemma~3.1]{Gar12} has been applied to (NLS + KdV).
\vskip .5em

The stability of 
$ \mathcal{G}_\lambda (u) $ is more challenging than the stability 
of $ \mathcal{G}_\lambda $: there might be
solutions to \eqref{eq.2017-07-27.10:05} with initial values 
close to $ \mathcal{G}_\lambda (u) $, but intermediate values far
from it. Another application of the Concentration-Compactness Lemma
and the stability of the ground state implies that these
intermediate values are close to another set 
$ \mathcal{G}_\lambda (v) $ (as shown in 
\textsc{Figure}~\ref{fig.2017-07-27.15:17}). 
A simple way to rule out the existence of these trajectories
is to prove that there is only
one $ \mathcal{G}_\lambda (u) $, as $ u $ varies in 
$ \mathcal{G}_\lambda $. This is the approach followed in \cite{CL82} 
with the help of a uniqueness result, \cite{MS87},
which specifically applies to pure-powers.
Therefore, $ \mathcal{G}_\lambda = \mathcal{G}_\lambda (u) $, and the 
second set is stable because the first one is stable. 
Another way is to show
that there are only finitely many of these sets 
$ \mathcal{G}_\lambda (u) $.
In this case (see \textsc{Figure}~\ref{fig.2017-07-27.15:17}), 
trajectories bridging two different sets need to achieve a minimum 
amount of energy, which is too
high if the initial value is too close to 
$ \mathcal{G}_\lambda (u) $, as it follows from \cite[\S4]{GG17}. 
Now, from the work of L.~Jeanjean 
and J.~Byeon, \cite{BJM09}, in every set 
$ \mathcal{G}_\lambda (u) $
there exists a unique positive $ R $ in $ H^1 _r $. Therefore,
the problem of the stability of the set \eqref{eq.2017-07-27.10:49}
reduces to showing that $ \mathcal{G}_{\lambda,r} := \mathcal{G}_\lambda\cap H^1 _r $ is finite, \cite[Proposition~5]{GG17}. And this 
follows straightforwardly from the non-degeneracy of minima 
of $ E_r $. From \cite[Corollary~2]{GG17}, the uniqueness holds if 
$ \mathcal{G}_{\lambda,r} ^+ := 
 \mathcal{G}_{\lambda}\cap H^1 _{r,+} $ is a singleton.
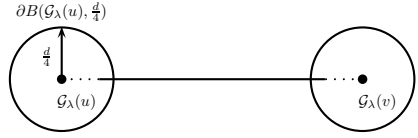
\begin{figure}
\caption{Trajectories bridging two different sets}
\label{fig.2017-07-27.15:17}
\begin{pspicture}[showgrid=false](0,-1)(4,1.5)
\psunit=10pt
\pscircle(0,0){2}
\pscircle(4,0){2}
\psdots(0,0)(4,0)
\psline{->}(0,0)(0,0.7)
\rput(-0.2,0.3){\scalebox{0.6}{$ \frac{d}{4} $}}
\rput(0.2,-0.3){\scalebox{0.6}{$ \mathcal{G}_{\lambda} (u) $}}
\rput(4.2,-0.3){\scalebox{0.6}{$ \mathcal{G}_{\lambda} (v) $}}
\rput(0,0.9){\scalebox{0.6}{$ \partial B(\mathcal{G}_\lambda (u),
\frac{d}{4}) $}}
\psline[linestyle=dotted](0,0)(0.5,0)
\psline(0.5,0)(3.5,0)
\psline[linestyle=dotted](3.5,0)(4,0)
\end{pspicture}
\end{figure}
%\end{center}
\section{Assumptions on $ F $ and 
non-degeneracy}
The non-linearity $ F $ is a $ C^2 $ real valued function defined 
on $ \mathbb{C} $; $ F(s) = G(|s|) $ for every $ s $ in 
$ \mathbb{C} $. We list our assumptions trying to keep the notation
consistent with \cite{GG17}:
\begin{gather}
\label{G1}
\tag{G1}
\exists s_0 > 0\text{ such that } G(s_0) < 0\\
\tag{G2b}
\label{G2b}
-C|s|^{p^*}\leq G(s),\quad s\geq s_*,\quad 2 < p_* < 6\\
\tag{G4'}
\label{G4}
G(0) = G'(0),\quad
|G''(s)|\leq C(|s|^{p - 2} + |s|^{q - 2}),\quad 2 < p < q
\end{gather}
are satisfied. 
%The order of the assumptions is switched as
%we wish to postpone the discussion of \eqref{G3}. 
\eqref{G4} makes 
sure that the energy functional $ E $ is 
$ C^2(H^1(\mathbb{R};\mathbb{C}),\mathbb{R}) $, which is 
a consequence of regularity theorems on Nemytski operators
proved in \cite{AP93}; \eqref{G2b} makes $ E $ a 
coercive functional on $ S(\lambda) $ and provides a-priori 
estimates, and thus the global well-posedness of 
\eqref{eq.2017-07-27.10:05}. For \eqref{G1} we refer to 
\cite{BBGM07}; together with \eqref{G2b}, it ensures that a 
minimum of $ E $ on $ S(\lambda) $ exists, if $ \lambda $ is large 
enough, see \cite[Theorem~2]{BBGM07}. In the quoted reference,
a stronger condition than \eqref{G2b} is actually used,
by setting $ s_* $ is to zero. However, their proof applies
under the weaked assumption as well. 
We illustrate how the non-degeneracy of 
minima of $ E $ on $ S(\lambda) $ are obtained in \cite{GG17}. 
We set 
$ S_r (\lambda) := S(\lambda)\cap H^1 _r (\mathbb{R};\mathbb{R}) $.
Suppose that $ R_0 $ is a minimum of $ E_r $ on $ S_r (\lambda) $. 
Then, there exists $ \omega_0 $ such that
\begin{equation}
\label{eq.5}
R_0 '' - G'(R_0) - \omega_0 R_0 = 0.
\end{equation}
By looking at conserved quantities, as in 
\cite[Proposition~3]{GG17}, there holds $ \omega_0 > 0 $.
In order to prove that $ R_0 $ is a non-degenerate minimum
of $ E_r $, it is enough to consider a function $ v $ in $ H^1 _r $
such that $ (v,R_0)_2 = 0 $ and show that
there exists $ C $ not depending on $ v $ such that
\begin{equation}
\label{eq.2017-07-28.13:58}
D^2 E(R_0)[v,v]\geq C\|v\|^2 _{H^1}.
\end{equation}
In order to evaluate the Hessian, it only takes to define 
a smooth curve $ \alpha $ from $ (-\varepsilon,\varepsilon) $
to $ S_r (\lambda) $ such that $ \alpha(0) = R_0 $, 
$ \alpha'(0) = v $. Then
\begin{equation*}
(E\circ\alpha)''(0) = D^2 E(R_0)[v,v] = \nint \Big(|v'(x)|^2  +
(G''(R_0(x)) + \omega_0) v(x)^2\Big) dx =: \xi(v).
\end{equation*}
Since the functional above is homogeneous, we only need to
to show that the infimum of $ \xi $ is positive when restricted
to the unit sphere $ S_r (1) $ in $ L^2 $. Then 
\eqref{eq.2017-07-28.13:58} will follow from the Banach-Steinhaus
theorem. The functional $ \xi $ is certainly non-negative, 
because the fact that $ R_0 $ is a minimum is part of our 
assumptions. 
It is convenient to prove that $ \xi $ does actually achieve 
its infimum.
We use as a reference the proof of \cite[Proposition~2.9]{Wei85}.
In fact, although that deals specifically with pure-powers,
it can be applied to more general non-linearities, provided
$ G''(0) = 0 $. Let $ v_0 $ be a minimum of $ \xi $. If 
$ \xi(v) = 0 $, then there exists $ \beta $ in $ \mathbb{R} $
such that
\begin{equation*}
L_+ (v) := -v'' + G''(R_0)v + \omega_0 v = \beta R_0.
\end{equation*}
From $ R_0 $ we construct a one-parameter family of solutions
starting from $ R_0 $. We premise a few remarks. Firstly,
$ R_0(0) $ is a solution to the equation
\begin{equation}
\label{eq.2017-07-28.12:07}
V(R_0 (0)) = \omega_0,\quad V(s) := -\frac{2G(s)}{s^2}
\end{equation}
which is the auxiliary function mentioned in the abstract of
this paper. Secondly, from \cite[Theorem~5]{BL83a}, $ R_0 (0) $ is 
the least positive solution to \eqref{eq.2017-07-28.12:07}.
Moreover, $ R_0 $ is also an even function decreasing
on $ [0,+\infty) $. Therefore, $ R_0 '' (0) < 0 $. Then 
\begin{equation*}
%\label{eq.2017-07-28.12:13}
R_0 (0) = \inf\{s > 0\mid V(s) = \omega_0\},\quad V'(R_0 (0)) > 0.
\end{equation*}
The second inequality is obtained by combining two equalities
which in turn can be obtained by multiplying \eqref{eq.5} by $ R_0 $
and $ R_0 ' $, as in \cite[Proposition~4]{GG17}.
The construction of the one-parameter family is made as follows:
the function 
\begin{equation*}
%\label{eq.2017-07-28.12:52}
R_* (\omega) := \inf\{s > 0\mid V(s) = \omega\}
\end{equation*}
is smooth in a neighborhood of $ \omega_0 $, because 
$ V'(R_0 (0)) > 0 $. We define the function $ R_\omega $
as solution to the initial value problem
\begin{equation}
\label{eq.2017-07-27.13:21}
R_\omega '' (x) - G'(R_\omega (x)) - \omega R_\omega(x) = 0
\quad R_\omega '(0) = 0,\quad R_\omega(0) = R_* (\omega).
\end{equation}
We set $ S(\omega,x) = \frac{\partial R}{\partial \omega} 
(\omega,x) $
and define $ S(\omega_0,x) := S_0 (x) $.
Therefore, taking the derivative with respect to $ \omega $
in \eqref{eq.2017-07-27.13:21}, and evaluating at 
$ \omega = \omega_0 $, we obtain
\begin{equation*}
L_+ (S_0) = R_0.
\end{equation*}
Taking the $ L^2 $ scalar product with $ R_0 $, we obtain
\[
\frac{1}{2}\frac{d}{d\omega}\|R(\omega_0,\cdot)\|_2 ^2 = 
(L^+ (S_0),R_0)_2.
\]
Up to a sign-change, the quantity appearing in the left term is 
the one denoted by $ Q(\omega) $ in the paper of 
N.~G.~Vakhitov and A.~A.~Kolokolov \cite{VK73}.
Therefore, it is worth to investigate the behavior of
the derivative of the function 
$ \lambda(\omega) := \|R(\omega,\cdot)\|_2 ^2 $ at the point
$ \omega = \omega_0 $. The calculations made in \cite[\S4]{GG17}
can be summarized as follows: there exists a positive function 
$ \Psi $ such that
\begin{equation}
\label{eq.2017-07-31.14:47}
\frac{d\lambda}{d\omega}(\omega_0) = -\frac{2R_* '(\omega_0)}%
{R_*(\omega_0)^5}\int_0\sp 1
\frac{\theta^2 (K(R_* (\omega_0)) - K(\theta R_* (\omega_0)))}
%\Psi(\theta,R_* (\omega),\omega)d\theta}%
{(\Psi(\theta,R_*(\omega_0),\omega_0))^{3/2}}d\theta\geq 0,
\end{equation}
where $ K(s) = \frac{1}{s^2}(-6G(s) + sG'(s)) $. At this point,
provided $ K $ is a strictly non-decreasing function,
we have $ \lambda'(\omega_0) > 0 $. Since
\begin{equation*}
K'(s) = \frac{12sG(s) - 7s^2G'(s) + s^3 G''(s)}{s^4}
\end{equation*}
this computation suggests to require that
$ 12G(s) - 7sG'(s) + s^2 G''(s) > 0 $ for every $ s $ in the
interval $ (0,R_* (\omega_0)) $. In fact, there is no need
to have a strict inequality here: since the integrand in
\eqref{eq.2017-07-31.14:47} is non-negative, 
if $ \lambda'(\omega_0) $
vanishes, then $ 12G(s) - 7sG'(s) + s^2 G''(s) = 0 $ on
$ (0,R_* (\omega_0)) $, which means that on this interval $ G $
is a linear combination of $ s^2 $ and $ s^6 $. However, the
coefficient of $ s^2 $ is zero, by \eqref{G4},
while the coefficient of $ s^6 $ is equal to zero because it is the 
pure-power critical case where minima of $ E $ over $ S(\lambda) $ 
do not
exist, see \cite[Proof~of~Lemma~3.1]{GG17}. 
Finally, since we wish
to address all the minima, regardless of the constraint, the 
set where the requirement holds should apply to the images
of all the minima. We define
\begin{equation*}
\Omega := \bigcup_{\lambda > 0}\bigcup_{R\in\mathcal{G}_\lambda} 
\text{Img}(R).
\end{equation*}
By \cite[Proposition~4]{GG17}, 
$ \Omega = (0,+\infty) $ if $ V $ is not bounded or $ V $ is 
bounded but $ \sup(V) $ is not achieved. Otherwise,
$ \Omega = (0,R_* (\max(V)) $. Therefore, in \cite{GG17}
we required
\begin{equation}
\label{G3}
\tag{G3}
L(s) := 12G(s) - 7sG'(s) + s^2 G''(s)\geq 0 \text{ on } \Omega.
\end{equation}
There are several non-linearities satisfying the condition
above, starting from pure-powers $ G(s) = -as^p $ with $ a > 0 $ and 
$ 2 < p\leq 6 $. Another example is the combined pure-power
$ G(s) = -as^p + bs^q $ with $ a,b > 0 $ and $ p < q $; 
clearly, in the latter case, 
\begin{equation*}
L(s) = a(p - 2)(6 - p)s^p - b(q - 2)(6 - q) s^q
\end{equation*}
might changes sign. However, the function is non-negative on 
$ \Omega $ which is a bounded interval for this choice of $ G $.
In fact, \eqref{G3} is satisfied, \cite[Corollary~2]{GG17}.
\section{Uniqueness of standing-waves}
\label{s.uniqueness}
The idea of how we obtain the uniqueness of standing-waves is the 
following: if there are two minima $ R_0 $ 
and $ R_1 $ belonging to the same constraint $ S(\lambda) $, 
we consider 
the corresponding Lagrange multipliers $ -\omega_0 $ and
$ -\omega_1 $. From \eqref{eq.2017-07-31.14:47}, 
the function $ \lambda $ is injective
on $ [\omega_0,\omega_1] $, which implies that $ \lambda $ 
is constant, because achieves the same values at the endpoints.
Then $ L \equiv 0 $ on $ (0,R_*(\omega_1)) $ which
implies that $ G $ is a linear combination of $ s^2 $ and $ s^6 $
and gives a contradiction with the sub-critical assumptions. The
only thing we need to take care of is the definition
of $ \lambda $, which is smooth as long as
$ R_* $ is smooth. In turn $ R_* $ is smooth
on $ \omega $ if $ V'(R_* (\omega))\neq 0 $.
Therefore, critical points of $ V $
represent potential discontinuities of the function $ R_* $.
However, $ R_* $ is continuous everywhere if, for instance,
$ V $ does not have local maxima or the first local maximum
is an absolute maximum. Therefore, we set
\begin{equation*}
A := \{s > 0\mid s\text{ is a local maximum of } V\}.
\end{equation*}
The assumption introduced in \cite{GG17} reads
\begin{equation}
\label{G5}
\tag{G5}
%\begin{array}{c}
A = \emptyset\text{ or } (
A\neq\emptyset,\text{ } V\text{ is bounded and }
V(\inf(A)) = \sup(V) < +\infty).
%\end{array}
\end{equation}
To summarize, condition \eqref{G3} allows to state that
the set $ \mathcal{G}_{\lambda,r}^+ $ is finite. If 
\eqref{G5} holds as well, 
then $ \mathcal{G}_{\lambda,r}^+ $ is a singleton,
\cite[Theorem~1.4]{GG17}.
\begin{theorem}[{\cite[Theorem~1.4]{GG17}}]
If the conditions \eqref{G1}, \eqref{G2b}, \eqref{G3},
\eqref{G4} and \eqref{G5} hold, then 
$ \mathcal{G}_\lambda\cap H^1 _r $ consists of exactly two functions, 
$ R_+ $ and $ R_- $. The first is positive while $ R_- = -R_+ $.
\end{theorem}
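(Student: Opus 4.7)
My plan is to reduce the claim, via \cite[Corollary~2]{GG17}, to showing that $\mathcal{G}_{\lambda,r}^+$ contains at most one element, and then to argue by contradiction along the lines outlined at the start of Section~\ref{s.uniqueness}. So suppose there exist two distinct positive radial minimizers $R_0, R_1 \in \mathcal{G}_{\lambda,r}^+$, with associated Lagrange multipliers $-\omega_0$ and $-\omega_1$. Each $R_j$ is positive, even, and decreasing, so $R_j(0) = R_*(\omega_j)$ by the observation recalled just before \eqref{eq.2017-07-27.13:21}. Uniqueness for the Cauchy problem for \eqref{eq.5} at $x = 0$ then forces $\omega_0 \ne \omega_1$, and I may assume $0 < \omega_0 < \omega_1$.

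The next step is to study $\lambda(\omega) := \|R_\omega\|_2^2$ on the interval $[\omega_0, \omega_1]$. Since $R_0, R_1 \in S(\lambda)$, one has $\lambda(\omega_0) = \lambda(\omega_1) = \lambda$. Hypothesis \eqref{G5} ensures that $R_*$ is continuous on its natural domain, and smooth away from a (closed) set of critical points of $V$; hence $\lambda$ is continuous on $[\omega_0, \omega_1]$ and smooth off such points. At every smoothness point $\omega$, formula \eqref{eq.2017-07-31.14:47} applies with $\omega_0$ replaced by $\omega$: using \eqref{G3}, which forces $K$ to be monotone on $\Omega$, the integrand has a definite sign and the formula delivers $\lambda'(\omega) \ge 0$. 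A continuous, weakly monotone function attaining the same value at both endpoints of an interval must be constant, so $\lambda$ is constant on $[\omega_0, \omega_1]$.

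Consequently $\lambda'(\omega) = 0$ at every smoothness point. Since in \eqref{eq.2017-07-31.14:47} the kernel $\Psi^{-3/2}$ is strictly positive and $R_*'(\omega) \ne 0$ wherever $V'(R_*(\omega)) \ne 0$, this forces $K(\theta R_*(\omega)) = K(R_*(\omega))$ for every $\theta \in (0,1]$ and every such $\omega$. Hence $K$ is constant on $(0, R_*(\omega_1))$, so $L \equiv 0$ there. Solving the Euler equation $s^2 G''(s) - 7 s G'(s) + 12 G(s) = 0$ yields $G(s) = a s^2 + b s^6$ on this interval; \eqref{G4} forces $G''(0) = 0$, hence $a = 0$, and the remaining pure $s^6$ non-linearity is the $L^2$-critical case for which \cite[Proof of Lemma~3.1]{GG17} shows that no minimum of $E$ on $S(\lambda)$ exists. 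This contradicts the existence of $R_0$, so $\mathcal{G}_{\lambda,r}^+ = \{R_+\}$. The statement $R_- = -R_+$ and the fact that these are the only elements of $\mathcal{G}_\lambda \cap H^1_r$ both follow from the invariance of $E$ under $u \mapsto -u$ together with \cite[Corollary~2]{GG17}.

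The main obstacle is the possible non-smoothness of $R_*$ at critical points of $V$: without controlling these, one cannot even make sense of $\lambda$ as a continuous function on $[\omega_0, \omega_1]$, let alone integrate the monotonicity information from \eqref{eq.2017-07-31.14:47}. Hypothesis \eqref{G5} is exactly tailored to this difficulty, guaranteeing global continuity of $R_*$ so that the smooth strata can be glued into the global statement that $\lambda$ is monotone on $[\omega_0, \omega_1]$, and hence constant.
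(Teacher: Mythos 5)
Your argument is essentially the paper's own: the theorem is imported from \cite[Theorem~1.4]{GG17}, and the only justification the paper offers is the sketch at the opening of \S\ref{s.uniqueness} --- reduce to $\mathcal{G}_{\lambda,r}^+$ being a singleton, use \eqref{G5} to keep $R_*$ (hence $\lambda(\omega)$) continuous, use \eqref{eq.2017-07-31.14:47} with \eqref{G3} to force $\lambda$ constant on $[\omega_0,\omega_1]$, and rule out the resulting $s^2$/$s^6$ degeneration by sub-criticality --- which is exactly the route you follow, with reasonable extra detail. I see no genuine divergence or gap relative to the paper's treatment.
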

The assumption (G2a) in \cite{GG17} has been omitted here, as
it can be replaced by \eqref{G4}. This explains the slight
difference with the referenced theorem.
We consider
\begin{equation*}
G(s) = \var_a a s^p + \var_b b s^q + \var_c c s^r,\ 
\{\var_a,\var_b,\var_c\}\subseteq\{-1,0,1\},\ a,b,c > 0,\
2 < p < q < r
\end{equation*}
and discuss the assumptions mentioned above. 
In the remainder of the paper we will describe
the behavior of the two properties for pure-powers, combined 
pure-power, and three pure-power combinations. Some cases 
have already been illustrated in \cite[\S5]{GG17}, but 
we included them for the sake of completeness.
We will leave out the cases 
$ \{\var_a,\var_b,\var_c\}\subseteq\{0,1\} $
as \eqref{G1} is not fulfilled. \eqref{G4} follows from the
fact that all the exponents are bigger than two. 
When the coefficient of highest order term at infinity is positive
\eqref{G2b} is satisfied.
\subsection{Pure-powers}
\label{ss.pp}
If $ G(s) = -as^p $, then \eqref{G1} is satisfied because $ G < 0 $
and \eqref{G2b} holds if $ p < 6 $. Then the function
$ L(s) = a(p - 2)(6 - p)s^p $ is non-negative, while 
$ V = 2as^{p - 2} $ does not have 
local maxima, implying that \eqref{G5} is satisfied. 
\subsection{Combined pure-powers}
\label{ss.cp}
Firstly, we consider the case $ G(s) = -as^p + bs^q $ which
clearly achieves negative values. 
The function $ V $ is bounded and has a single local 
maximum. Therefore, \eqref{G5} is satisfied and \eqref{G3} is 
satisfied
if the (unique) zero of $ L $ occurs \textsl{before} the local
maximum of $ V $, which is the unique zero of $ V' $. We will show that
$ V'(s_0) = 0 $ implies $ L(s_0) > 0 $. In fact, $ V'(s_0) = 0 $
gives
\begin{equation*}
2a(p - 2)s_0 ^{p - 3} - 2b(q - 2)s_0 ^{q - 3} = 0.
\end{equation*}
If we multiply it by $ s_0 ^3 $, and substitute 
$ 2b(q - 2) s_0 ^q $ with $ 2a(p - 2)s_0 ^p $ in $ L $,
we obtain $ L(s_0) = a(p - 2)(q - p)s_0^p > 0 $ which implies
\eqref{G3}. In fact, no sub-critical assumption (which was
required in \cite[\S5]{GG17}) is needed.
If $ G = -as^p - bs^q $ then $ G < 0 $ which implies \eqref{G1}. 
For \eqref{G2b} to hold, we need $ q < 6 $. Then
$ L > 0 $ on $ (0,+\infty) $. \eqref{G5} is satisfied because
$ V' > 0 $ on $ (0,+\infty) $, so $ A = \emptyset $. 
Finally, if $ G(s) = as^p - bs^q $, $ q < 6 $. Since $ V' $
goes to $ +\infty $, $ \Omega = (0,+\infty) $. However,
$ L $ clearly changes sign in a neighborhood of the origin.
Therefore \eqref{G3} is not satisfied.
\subsection{Three pure-power combinations}
The cases with three negative coefficients are ruled out as 
in \ref{ss.cp}. Then all the assumptions are fulfilled.
\eqref{G1} can be easily checked except for the
case $ (\var_a,\var_b,\var_c) = (1,-1,1) $ in \S\ref{sss.pmp}.
The following remark will be useful 
in \S\ref{sss.pmp} and \S\ref{sss.mpm}: given the function
\begin{equation*}
k(s) := A - Bs^{q - p} + Cs^{r - q}
\end{equation*}
with $ p < q < r $ and $ A,B,C $ positive real numbers there holds
\begin{equation}
\label{eq.2}
\inf(k)\geq 0\iff 
A\geq B^{\frac{r - p}{r - q}} C^{\frac{p - q}{r - q}} d_*
\end{equation}
where 
\begin{equation}
\label{eq.6}
d_* := 
\left[\left(\frac{q - p}{r - p}\right)^{\frac{q - p}{r - p}} - 
\left(\frac{q - p}{r - p}\right)^{\frac{r - p}{r - q}}\right] > 0.
\end{equation}
It is obtained by evaluating $ k $ on its unique minimum, obtained
by solving explicitly $ k' = 0 $.
\subsubsection{$ G(s) = -as^p - bs^q + cs^r $}
\label{sss.mmp}
Clearly \eqref{G5} holds, because the set $ A $ is a 
singleton.
If $ p\geq 6 $, then $ L < 0 $ in a neighborhood of the origin.
Therefore \eqref{G3} does not hold because $ \Omega $ contains
small neighborhoods of the origin, as shown in Figure~\ref{fig.mmp}. 
For the case $ p < 6 $
it is convenient to divide $ V' $ and
$ L $ by the leading coefficient, $ s^{q - p} $, 
and use the substitution $ t = s^{q - p} $. As in 
\S\ref{ss.cp}, we need to know the behavior of $ L $ at the
unique zero of $ V' $. We set
\begin{align}
g(t) &:= 1 + \frac{b(q - 2)(6 - q)}{a(p - 2)(6 - p)} t - 
\frac{c(r - 2)(6 - r)}%
{a(p - 2)(6 - p)}t^{\frac{r - p}{q - p}}\\
h(t) &:= 1 + \frac{b(q - 2)}{a(p - 2)} t - \frac{c(r - 2)}{a(p - 2)}
t^{\frac{r - p}{q - p}}.
\end{align}
Let $ t_0 $ be the unique zero of $ h $. 
From $ h(t_0) = 0 $ we obtain
\begin{equation*}
\frac{c(r - 2)}{a(p - 2)} t_0 ^{\frac{r - p}{q - p}} = 
\left(\frac{b(q - 2)}{a(p - 2)} t_0 + 1\right).
\end{equation*}
Then
\begin{equation*}
\begin{split}
g(t_0) &= 
1 + \frac{b(q - 2)(6 - q)}{a(p - 2)(6 - p)} t_0 - 
\frac{6 - r}{6 - p}\left(
\frac{b(q - 2)}{a(p - 2)} t_0 + 1\right) \\
&= \frac{b(q - 2)(r - q)}{a(p - 2)(6 - p)} t_0 + 1 -
\frac{6 - r}{6 - p} > 0.
\end{split}
\end{equation*}
Then \eqref{G3} holds. The behavior of $ g $ and
$ h $ is represented in Figure~\ref{fig.mmp.2}.
\begin{center}
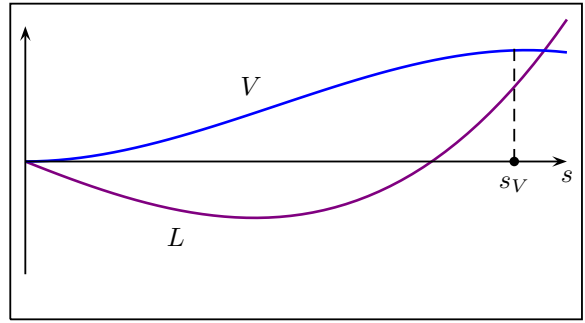
\begin{figure}
\caption{$ L < 0 $ on a subset of $ \Omega = (0,s_V) $}
\label{fig.mmp}
%every small neighbourhood of zero}
%\fbox{%
\begin{pspicture}[showgrid=false](0,-2)(7.2,2.5)
%\psset{xunit=1cm,yunit=0.1cm}
%\psplot[algebraic,linewidth=1pt]{0.4}{1}{0.1*(-128*x^10 - 16*12*x^18 +%
%5*17*13*x^19)}
%\psplot[algebraic,linewidth=1pt]{0.4}{1}{0.1*(8*x^8 + 10*x^16 -%
%10*x^17)}
\psline[linewidth=0.7pt](-0.2,-2.1)(7.4,-2.1)(7.4,2.1)(-0.2,2.1)(-0.2,-2.1)
\rput(6.5,-0.3){$ s_V $}
\psdot(6.5,0)
\psplot[algebraic,linewidth=1pt,linecolor=violet]{0}{7.2}%
{0.01*(-40*x + 2*x^2 + x^3)}
\psplot[algebraic,linewidth=1pt,linecolor=blue]{0}{7.2}%
{0.1*(x^2 - 0.1*x^3)}
\psline[linewidth=0.7pt,arrowsize=4pt]{->}(0,0)(7.2,0)
\psline[linewidth=0.7pt,arrowsize=4pt]{->}(0,-1.5)(0,1.8)
\psline[linewidth=0.7pt,linestyle=dashed](6.5,0)(6.5,1.5)
\rput(7.2,-0.2){$ s $}
%\rput(2.5,2.5){\includegraphics{gr1}}
\rput(3,1){$ V $}
\rput(2,-1){$ L $}
\end{pspicture}
\end{figure}
\end{center}
% \end{pspicture}
\subsubsection{$ G(s) = -as^p + bs^q + cs^r $}
\label{sss.mpp}
\eqref{G5} always holds as $ V $ has a single local maximum. 
If $ p = 6 $, then $ L > 0 $ everywhere. For $ p\neq 6 $
we can define the functions $ g $ and $ h $
in the same fashion as in \S\ref{sss.mmp}
\begin{align}
g(t) &:= 1 - \frac{b(q - 2)(6 - q)}{a(p - 2)(6 - p)} t - 
\frac{c(r - 2)(6 - r)}%
{a(p - 2)(6 - p)}t^{\frac{r - p}{q - p}}\\
h(t) &:= 1 - \frac{b(q - 2)}{a(p - 2)} t - \frac{c(r - 2)}%
{a(p - 2)}
t^{\frac{r - p}{q - p}}.
\end{align}
If $ p < 6 $ then $ g\geq h $ because each coefficient
of $ g $ is larger than the corresponding coefficient of 
$ h $. Therefore, the first zero of $ L $ occurs after
the first zero of $ V' $, and \eqref{G3}
holds, Figure~\ref{fig.mmp}.
If $ p > 6 $, then $ L $ is negative in a 
neighborhood of
the origin, therefore \eqref{G3} does not hold.
\begin{center}
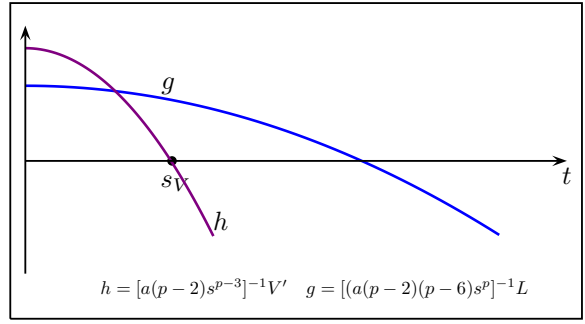
\begin{figure}
\caption{The zero of $ L $ occurs after the first zero of
$ V' $, from \S\ref{sss.mmp}.}
\label{fig.mmp.2}
%every small neighbourhood of zero}
%\fbox{%
\begin{pspicture}[showgrid=false](0,-2)(7.2,2.5)
%\psset{xunit=1cm,yunit=0.1cm}
%\psplot[algebraic,linewidth=1pt]{0.4}{1}{0.1*(-128*x^10 - 16*12*x^18 +%
%5*17*13*x^19)}
%\psplot[algebraic,linewidth=1pt]{0.4}{1}{0.1*(8*x^8 + 10*x^16 -%
%10*x^17)}
\psline[linewidth=0.7pt](-0.2,-2.1)(7.4,-2.1)(7.4,2.1)%
(-0.2,2.1)(-0.2,-2.1)
%\rput(2.51,0.3){$ t_1 $}
%\psdot(2.51,0)
\rput(2,-0.3){$ s_V $}
\psdot(1.95,0)
\psplot[algebraic,linewidth=1pt,linecolor=blue]{0}{6.3}%
{0.1*(10 - 0.5*x^2)}
\psplot[algebraic,linewidth=1pt,linecolor=violet]{0}{2.5}%
{0.05*(30 - 8*x^2)}
\psline[linewidth=0.7pt,arrowsize=4pt]{->}(0,0)(7.2,0)
\psline[linewidth=0.7pt,arrowsize=4pt]{->}(0,-1.5)(0,1.8)
%\psline[linewidth=0.7pt,linestyle=dashed](6.5,0)(6.5,1.5)
\rput(7.2,-0.2){$ t $}
%\rput(2.5,2.5){\includegraphics{gr1}}
\rput(1.9,1){$ g $}
\rput(2.6,-0.8){$ h $}
\rput[l](1,-1.7){\scalebox{0.7}{%
$ h = [a(p - 2)s^{p - 3}]^{-1} V' $\hskip 1em
$ g = [(a(p - 2)(p - 6)s^p]^{-1} L $}}
%\rput(1.98,-1.9){\scalebox{0.75}{%
%$ g = [(a(p - 2)(p - 6)s^p]^{-1} L $}}
\end{pspicture}
\end{figure}
\end{center}
\subsubsection{$ G(s) = as^p - bs^q - cs^r $}
\label{sss.pmm}
For \eqref{G2b} to hold, $ r < 6 $ must be satisfied. 
$ \Omega = (0,+\infty) $, while $ \inf(L) < 0 $. Then
\eqref{G5} holds, but \eqref{G3} does not.
\subsubsection{$ G(s) = as^p - bs^q + cs^r $}
\label{sss.pmp}
For \eqref{G1} to hold we need $ \inf(G) < 0 $. If we set
$ k := [as^p]^{-1} G $, the equivalence \eqref{eq.2} gives
\begin{equation}
\label{eq.2017-08-26.10:59}
a < b^{\frac{r - p}{r - q}} c^{\frac{p - q}{r - q}} d_*.
\end{equation}
If $ p \leq 6 $, then \eqref{G3} does not hold, because
$ L $ is negative in a neighborhood of the origin, as in
Figure~\ref{fig.mmp}. Before looking at the case $ p > 6 $
it is useful to observe that from \eqref{G1}
we have $ \sup(V') > 0 $. On the contrary, $ \inf(-V')\geq 0 $.
We apply \eqref{eq.2} to $ k := [-a(p - 2)s^{p - 3}]^{-1} V' $ 
and obtain
\begin{equation}
\label{eq.2017-08-26.10:58}
a(p - 2)\geq [b(q - 2)]^{\frac{r - p}{r - q}} 
[c(r - 2)]^{\frac{p - q}{r - q}} d_*.
\end{equation}
Dividing term-wise \eqref{eq.2017-08-26.10:58} by 
\eqref{eq.2017-08-26.10:59}, we obtain
\begin{equation}
\label{eq.2017-08-26.11:04}
p - 2 > (q - 2)^{\frac{r - p}{r - q}} 
(r - 2)^{\frac{p - q}{r - q}}.
\end{equation}
By exponentiating both terms to $ r - q $, dividing
by $ (p - 2)^{r - q} $ and applying the variable changes
$ x = p - 2 $, $ y = q - 2 $ and $ z = r - 2 $, 
\eqref{eq.2017-08-26.11:04} reads $ M(x,y,z) > 1 $ which
contradicts Lemma~\ref{lem.1}. Then $ \sup(V') > 0 $ and
\eqref{G5} holds too.

When $ p > 6 $ we need to compare $ L $ and $ V' $. 
Since $ \sup(V') > 0 $, it has two distinct zeroes. 
We will show that $ L $ is negative in the first zero of 
$ V' $, as in Figure~\ref{fig.pmp}. We set
\begin{align}
g(t) &:= 1 - \frac{b(q - 2)(6 - q)}{a(p - 2)(6 - p)} t +
\frac{c(r - 2)(6 - r)}%
{a(p - 2)(6 - p)}t^{\frac{r - p}{q - p}}\\
h(t) &:= 1 - \frac{b(q - 2)}{a(p - 2)} t + \frac{c(r - 2)}{a(p - 2)}
t^{\frac{r - p}{q - p}}.
\end{align}
We call $ t_1 $ the first zero of $ h $. Since $ hV' < 0 $
on $ (0,+\infty) $, $ h(t_1) = 0 $ and $ h'(t_1) < 0 $. 
From $ h(t_1) = 0 $, we obtain
\begin{equation}
\label{eq.2017-08-26.09:48}
%\begin{align}
\frac{c(r - 2)}{a(p - 2)} t_1 ^{\frac{r - p}{q - p}} = 
\frac{b(q - 2)}{a(p - 2)} t_1 - 1
\end{equation}
which we can substitute into the inequality 
$ t_1 h'(t_1) < 0 $ and obtain
\begin{equation}
- \frac{b(q - 2)}{a(p - 2)} t_1 + 
\frac{r - p}{q - p} \left(\frac{b(q - 2)}{a(p - 2)} t_1 - 1\right) < 0
\end{equation}
which gives
\begin{equation}
\label{eq.2017-08-25.17:49}
t_1 < \frac{a(p - 2)(r - p)}{b(q - 2)(r - q)}.
\end{equation}
Therefore, from \eqref{eq.2017-08-26.09:48} and 
\eqref{eq.2017-08-25.17:49}
\begin{equation*}
\begin{split}
g(t_1)&= 1 - \frac{b(q - 2)(6 - q)}{a(p - 2)(6 - p)} t_1 
+ \frac{6 - r}{6 - p}
\left(\frac{b(q - 2)}{a(p - 2)} t_1 - 1\right)\\
&= \frac{b(q - 2)(r - q)}{a(p - 2)(p - 6)} t_1 - \frac{r - p}{p - 6}\\
&< \frac{b(q - 2)(r - q)}{a(p - 2)(p - 6)}\cdot
\frac{a(p - 2)(r - p)}{b(q - 2)(r - q)} - \frac{r - p}{p - 6} = 0.
\end{split}
\end{equation*}
Then, regardless of the values of the exponents, \eqref{G3} is
never met, while \eqref{G5} holds.
\begin{center}
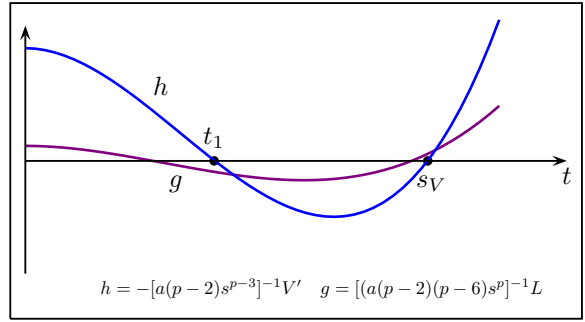
\begin{figure}
\caption{The zero of $ L $ occurs before the first zero of
$ V' $, from \S\ref{sss.pmp}.}
\label{fig.pmp}
%every small neighbourhood of zero}
%\fbox{%
\begin{pspicture}[showgrid=false](0,-2)(7.2,2.5)
%\psset{xunit=1cm,yunit=0.1cm}
%\psplot[algebraic,linewidth=1pt]{0.4}{1}{0.1*(-128*x^10 - 16*12*x^18 +%
%5*17*13*x^19)}
%\psplot[algebraic,linewidth=1pt]{0.4}{1}{0.1*(8*x^8 + 10*x^16 -%
%10*x^17)}
\psline[linewidth=0.7pt](-0.2,-2.1)(7.4,-2.1)(7.4,2.1)(-0.2,2.1)(-0.2,-2.1)
\rput(2.51,0.3){$ t_1 $}
\psdot(2.51,0)
\rput(5.4,-0.3){$ s_V $}
\psdot(5.35,0)
\psplot[algebraic,linewidth=1pt,linecolor=violet]{0}{6.3}%
{0.01*(20 - 10*x^2 + 1.8*x^3)}
\psplot[algebraic,linewidth=1pt,linecolor=blue]{0}{6.3}%
{0.05*(30 - 8*x^2 + 1.3*x^3)}
\psline[linewidth=0.7pt,arrowsize=4pt]{->}(0,0)(7.2,0)
\psline[linewidth=0.7pt,arrowsize=4pt]{->}(0,-1.5)(0,1.8)
%\psline[linewidth=0.7pt,linestyle=dashed](6.5,0)(6.5,1.5)
\rput(7.2,-0.2){$ t $}
%\rput(2.5,2.5){\includegraphics{gr1}}
\rput(1.8,1){$ h $}
\rput(2,-0.3){$ g $}
\rput[l](1,-1.7){\scalebox{0.7}{%
$ h = -[a(p - 2)s^{p - 3}]^{-1} V' $\hskip 1em
$ g = [(a(p - 2)(p - 6)s^p]^{-1} L $}}
%\rput(1.98,-1.9){\scalebox{0.75}{%
%$ g = [(a(p - 2)(p - 6)s^p]^{-1} L $}}
\end{pspicture}
\end{figure}
\end{center}
\subsubsection{$ G(s) = as^p + bs^q - cs^r $}
\label{sss.ppm}
\eqref{G2b} implies $ r < 6 $. Therefore, $ \inf(L) < 0 $
on $ (0,\var) $ for $ \var $ suitably small and \eqref{G3}
does not hold, as in Figure~\ref{fig.mmp}.
\subsubsection{$ G(s) = -as^p + bs^q - cs^r $}
\label{sss.mpm}
The conclusions we reached so far depend only on the exponents
and not on the coefficients, as long as their signs are 
prescribed. This case is an exception. 
Firstly, \eqref{G2b} forces $ r < 6 $. 
Since $ V $ is not bounded, $ \Omega = (0,+\infty) $.
If we apply \eqref{eq.2}
with $ k := [a(p - 2)(6 - p)s^p]^{-1} L $, we need
\begin{equation}
\label{eq.1}
a(p - 2)(6 - p)\geq b^{\frac{r - p}{r - q}} 
(q - 2)^{\frac{r - p}{r - q}} 
(6 - q)^{\frac{r - p}{r - q}}
\cdot c^{\frac{p - q}{r - q}}
(r - 2)^{\frac{p - q}{r - q}} 
(6 - r)^{\frac{p - q}{r - q}}
d_*
\end{equation}
where $ d_* $ has been defined in \eqref{eq.6}. \eqref{eq.1}
can be true or false depending on whether
$ a $ is large or small, respectively. 
Therefore, in this section we just show that
whenever \eqref{G3} is satisfied, \eqref{G5} is satisfied too.
In fact, if \eqref{G5} does not hold, then $ \inf(V') < 0 $.
By applying \eqref{eq.2} with $ k := [a(p - 2)s^{p - 3}]V' $,
we obtain
\begin{equation}
\label{eq.3}
a(p - 2) < b^{\frac{r - p}{r - q}} 
(q - 2)^{\frac{r - p}{r - q}} 
%(6 - q)^{\frac{r - p}{r - q}}
c^{\frac{p - q}{r - q}}
(r - 2)^{\frac{p - q}{r - q}} 
%(6 - r)^{\frac{p - q}{r - q}}
d_*.
\end{equation}
We divide term-wise \eqref{eq.3} by
\eqref{eq.1} and obtain
\begin{equation}
\label{eq.4}
(6 - p) > (6 - q)^{\frac{r - p}{r - q}}
(6 - r)^{\frac{p - q}{r - q}}.
\end{equation}
By exponentiating both terms to $ r - q $, dividing
by $ (6 - p)^{r - q} $ and applying the variable changes
$ x = 6 - p $, $ y = 6 - q $ and $ z = 6 - r $, 
\eqref{eq.4} reads $ M(x,y,z) > 1 $ which
contradicts Lemma~\ref{lem.1}.
\vskip .5em

We give a proof of the lemma we referred to in \S\ref{sss.pmp}
and \S\ref{sss.mpm}.
\begin{lemma}
\label{lem.1}
Let $ M $ and $ D $ be the function and domain defined as
\[
M(x,y,z) = y^{z - x} z^{x - y} x^{y - z},\quad 
D := \{0 < z \leq y \leq x\}.
\]
Then $ \sup_D (M) = 1 $ and $ M < 1 $ in the interior of
$ D $. Moreover, for every $ (x,y,z) $ in $ D $, $ M(x,y,z) = 1 $ 
if and only if $ x = y $ or $ y = z $.
\end{lemma}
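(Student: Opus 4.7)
The plan is to reduce to a one-variable convexity argument by fixing the extremal pair $(x,z)$ and viewing $M$ (or rather $\log M$) as a function of the middle variable $y \in [z,x]$. Concretely, set
\[
f(y) := \log M(x,y,z) = (z-x)\log y + (x-y)\log z + (y-z)\log x,
\]
so that the lemma amounts to showing that $f \leq 0$ on $[z,x]$, with equality only at the endpoints $y=z$ and $y=x$ (unless $x=z$, in which case the interval degenerates to a point).

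First I would verify the endpoint identities by direct substitution: $f(z) = (z-x)\log z + (x-z)\log z + 0 = 0$, and symmetrically $f(x) = 0$. This is essentially the content of the last sentence of the lemma.

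The main step, and essentially the whole content of the inequality, is then a one-line convexity computation: differentiating twice in $y$ kills the linear terms and leaves
\[
f''(y) = \frac{x-z}{y^2} \geq 0
\]
on $[z,x]$, so $f$ is convex there. A convex function on a compact interval that vanishes at both endpoints is $\leq 0$ on the whole interval, with strict inequality at every interior point unless $f''\equiv 0$, i.e. unless $x=z$. Hence $f(y) \leq 0$ for every $(x,y,z) \in D$, so $M \leq 1$ on $D$ and $\sup_D M = 1$; moreover $M(x,y,z) = 1$ in $D$ forces either $y = z$, $y = x$, or $x = z$, the last of which collapses to $x = y = z$ and is covered by the first two conditions.

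I do not expect a serious obstacle here: the statement is symmetric enough that any attempt at a global Lagrange-multiplier calculation or an AM--GM manipulation would work, but isolating the middle variable and exploiting convexity makes the proof essentially immediate and avoids case analysis on the ordering.
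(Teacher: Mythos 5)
Your proof is correct, and it is a genuinely cleaner execution of the same underlying idea. Both arguments pass to the logarithm, fix all but one degree of freedom, observe that the resulting one-variable function vanishes at the two endpoints of its interval, and conclude negativity in between from a convexity-type fact. The difference is in which variable is isolated. The paper first normalizes by writing $M^{1/y}$, substitutes the ratios $a=x/y$ and $b=z/x$, and then studies $H(a,b)=a(1-b)\ln a+(a-1)\ln b$ on $[1,1/b]$ by showing $\partial_a H$ is negative at $a=1$ and monotonically increasing (which is just convexity in $a$ stated through the derivative); this costs a substitution and a slightly longer sign analysis. You instead freeze the extreme pair $(x,z)$ and vary the middle variable $y$, for which the linear-in-$y$ terms of $\log M$ die under two differentiations and leave $f''(y)=(x-z)/y^2\geq 0$ on the nose. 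Your endpoint checks $f(z)=f(x)=0$ and the treatment of the equality case (convexity forces $f\equiv 0$ on the interval if it vanishes at an interior point, whence $x=z$ and hence $x=y=z$) are both sound, so the characterization $M=1\iff x=y$ or $y=z$ comes out correctly. In short: same strategy, better choice of coordinate; nothing is missing.
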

\begin{proof}
We have
\begin{equation}
\begin{split}
M(x,y,z) = \frac{z^{x - y} x^{y - z}}{y^{x - z}} = 
\left(\frac{z}{y}\right)^{x - y} \left(\frac{x}{y}\right)^{y - z}
= \left[\left(\frac{z}{y}\right)^{\frac{x}{y} - 1} 
\left(\frac{x}{y}\right)^{1 - \frac{z}{y}}\right]^y.
\end{split}
\end{equation}
In order to show that $ M < 1 $ it is enough to 
prove that $ M^{1/y} < 1 $. We substitute 
$ \frac{x}{y} $ with $ a $ and $ \frac{z}{x} $ with $ b $.
Then
\begin{equation}
M^{1/y} = a^{a(1 - b)} b^{a - 1}.
\end{equation}
We define 
\begin{equation}
H(a,b) = \ln(M^{1/y}) = a(1 - b)\ln(a)  + (a - 1)\ln(b).
\end{equation}
We fix $ 0 < b < 1 $ and consider the function $ H(a,b) $
on the interval $ b\leq a\leq \frac{1}{b} $. Clearly,
$ H(1,b) = H(\frac{1}{b},b) = 0 $. Moreover,
\[
\frac{\partial H}{\partial a} (a,b) = 
(1 - b)\ln(a) + (1 - b) + \ln(b).
\]
We have $ \frac{\partial H}{\partial a} (1,b) = (1 - b) + \ln(b) $
which is negative on the interval $ (0,1) $ and has 
$ \partial_a H $ is monotonically increasing
on the interval $ [1,\frac{1}{b}] $. Therefore, $ \partial_a H $
has at most one zero on this interval. In conclusion, 
$ H(\cdot,b) < 0 $ on the interval $ (1,\frac{1}{b}) $ which
implies $ M < 1 $ in the interior of $ D $, while the fact that
$ H(\cdot,b) = 0 $ on the boundary gives the second part of
the statement.
\end{proof}
\begin{center}
\begin{tabular}{|*{10}{c|}}
%{{p{1cm}}*{10}}
\hline
$ \var_a $ & $ \var_b $ & $ \var_c $ & 
$ 6 - p $ & $ 6 - q $ & $ 6 - r $ & 
$ \#A $ & $ \Omega $ & Assumptions & Section\\
\hline
\hline
\m & 0 & 0 & \p &  & & 0 
& $ (0,+\infty) $ & (G3) $ \wedge $ (G5) & \ref{ss.pp}\\
\hline
\hline
\m & \p & 0 & &  & & 1 & bounded & (G3) $ \wedge $ (G5) 
& \ref{ss.cp}\\
\hline
\p & \m & 0 & \p & \p & & 0 & $ (0,+\infty) $ 
& $ \neg $(G3) $ \wedge $ (G5) & \ref{ss.cp}\\
\hline
\m & \m & 0 & \p & \p & & 0 & $ (0,+\infty) $ 
& (G3) $ \wedge $ (G5) & \ref{ss.cp}\\
\hline
\hline
\p & \p & \m & \p & \p & \p & 0 & $ (0,+\infty) $
& $ \neg $(G3) $ \wedge $ (G5) & \ref{sss.ppm}\\
\hline
\p & \m & \p &  &  & & 0 & $ (0,+\infty) $
& $ \neg $(G3) $ \wedge $ (G5) & \ref{sss.pmp}\\
\hline
\p & \m & \m & \p & \p & \p & 0 & $ (0,+\infty) $
& $ \neg $(G3) $ \wedge $ (G5) & \ref{sss.pmm}\\
\hline
\hline
\m & \p & \p & \p &  & & 1 & bounded
& (G3) $ \wedge $ (G5) & \ref{sss.mpp}\\
\hline
\m & \p & \p & 0 & \m & \m & 1 & bounded
& (G3) $ \wedge $ (G5) & \ref{sss.mpp}\\
\hline
\m & \p & \p & \m & \m & \m & 1 & bounded
& $ \neg $(G3) $ \wedge $ (G5) & \ref{sss.mpp}\\
\hline
\hline
\m & \p & \m & \p & \p & \p & 0 & $ (0,+\infty) $
& (G3) $ \Rightarrow $ (G5) & \ref{sss.mpm}\\
\hline
\hline
\m & \m & \p & \m & \m & \m & 1 & bounded 
& $ \neg $(G3) $ \wedge $ (G5) & \ref{sss.mmp}\\
\hline
\m & \m & \p & 0 & \m & \m & 1 & bounded 
& $ \neg $(G3) $ \wedge $ (G5) & \ref{sss.mmp}\\
\hline
\m & \m & \p & \p &  & & 1 & bounded
& (G3) $ \wedge $ (G5) & \ref{sss.mmp}\\
\hline
\end{tabular}
\end{center}
\begin{theorem}
For every $ \lambda > 0 $ if the set
$ \mathcal{G}_\lambda\cap H^1 _{r,+} $ is non-empty 
then it is a singleton, provided $ G $
\begin{enumerate}[(i)]
\item is a pure-power with $ \var_a < 0 $
\item is a combined pure-power with $ \var_a < 0 $, or 
$ \var_a > 0 $ in sub-critical regime
\item is a three pure-power combination with 
$ (\var_a,\var_b,\var_c) = (-1,1,1) $ and $ p\leq 6 $ or
\item $ (\var_a,\var_b,\var_c) = (-1,-1,1) $ with $ p < 6 $ or
\item $ (\var_a,\var_b,\var_c) = (-1,1,-1) $ provided
$ r < 6 $ and inequality \eqref{eq.1} holds.
\end{enumerate}
\end{theorem}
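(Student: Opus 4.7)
The plan is to invoke the theorem from \cite{GG17} recalled at the beginning of Section~\ref{s.uniqueness}: under \eqref{G1}, \eqref{G2b}, \eqref{G3}, \eqref{G4} and \eqref{G5} one has $\mathcal{G}_\lambda\cap H^1_r = \{R_+,R_-\}$ with $R_- = -R_+$, so intersecting with the positive cone $H^1_{r,+}$ automatically produces the singleton $\{R_+\}$. Hence the whole argument reduces to verifying the five conditions in each of the cases (i)--(v), and nearly all of these verifications have already been carried out in the subsections of the present section.

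Because every exponent in $G$ exceeds $2$, \eqref{G4} holds throughout. Condition \eqref{G1} is either immediate from the presence of a negative coefficient or, in the only delicate case $(\varepsilon_a,\varepsilon_b,\varepsilon_c)=(+,-,+)$, is rephrased through the equivalence \eqref{eq.2} applied to $k := [as^p]^{-1}G$, giving precisely \eqref{eq.2017-08-26.10:59}. Condition \eqref{G2b} is a sub-criticality statement on the leading exponent at infinity, and this is what forces the restrictions $p<6$, $r<6$ appearing in the theorem. The remaining conditions \eqref{G3} and \eqref{G5} are handled case by case: (i) by \S\ref{ss.pp}; (ii) by \S\ref{ss.cp}; (iii) by \S\ref{sss.mpp}; (iv) by \S\ref{sss.mmp}; (v) by \S\ref{sss.mpm}. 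In each case \eqref{G5} follows from an analysis of $V(s)=-2G(s)/s^{2}$ showing that its local maxima either do not exist or reduce to a single absolute maximum, while \eqref{G3} follows from locating the first zero of $L$ relative to the first zero of $V'$, typically by rewriting both in the normalised form of the auxiliary functions $g$ and $h$ introduced in \S\ref{sss.mmp} and \S\ref{sss.mpp}.

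The main obstacle is case (v), $(\varepsilon_a,\varepsilon_b,\varepsilon_c)=(-,+,-)$, treated in \S\ref{sss.mpm}: here both $L$ and $V'$ change sign and \eqref{G3} is genuinely coefficient-dependent, which is why \eqref{eq.1} must be adopted as a hypothesis. The non-trivial point is to show that \eqref{eq.1} by itself forces \eqref{G5}. I argue by contradiction: if \eqref{G5} fails then $\inf(V')<0$, so the equivalence \eqref{eq.2} applied to $k := [a(p-2)s^{p-3}]^{-1}V'$ yields \eqref{eq.3}. Dividing \eqref{eq.3} term-wise by \eqref{eq.1}, exponentiating to the power $r-q$ and performing the substitutions $x=6-p$, $y=6-q$, $z=6-r$, the resulting inequality becomes $M(x,y,z)>1$, contradicting Lemma~\ref{lem.1}. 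Once \eqref{G5} is in force in case (v), the full hypothesis list is satisfied in every case, the theorem recalled above applies, and $\mathcal{G}_\lambda\cap H^1_{r,+}$ is indeed a singleton.
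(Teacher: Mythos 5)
Your argument is essentially the paper's own proof written out in full: the paper disposes of the theorem in one line by citing \cite[Theorem~1.4]{GG17} (restated at the start of \S\ref{s.uniqueness}) together with the case-by-case verification of \eqref{G1}, \eqref{G2b}, \eqref{G3}, \eqref{G4}, \eqref{G5} carried out in \S\ref{ss.pp}--\S\ref{sss.mpm}, and then reads off the singleton from $R_-=-R_+$. Your treatment of the delicate case (v) --- obtaining \eqref{eq.3} from $\inf(V')<0$ via \eqref{eq.2}, dividing by \eqref{eq.1}, and contradicting Lemma~\ref{lem.1} --- is exactly the computation of \S\ref{sss.mpm}, so the approach and the key steps coincide with the source.

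One caveat, which you inherit from the paper rather than introduce: in case (ii) your appeal to \S\ref{ss.cp} only covers $\var_a<0$. For $G=as^p-bs^q$ that subsection explicitly concludes that \eqref{G3} \emph{fails} (there $\Omega=(0,+\infty)$ while $L$ is negative near the origin), so \cite[Theorem~1.4]{GG17} does not apply to the sub-critical $\var_a>0$ sub-case, and the alternative route sketched at the beginning of \S\ref{s.uniqueness} also rests on \eqref{G3} through the sign of $\lambda'(\omega)$. If you want your write-up to be self-contained on this point, you would need to either restrict (ii) to $\var_a<0$ or supply a separate argument for $as^p-bs^q$; as it stands, the sentence ``(ii) by \S\ref{ss.cp}'' asserts the opposite of what that subsection proves for that sign pattern.
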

\begin{proof}
It follows from \cite[Theorem~1.4]{GG17} or the remarks
made at the introduction of \S\ref{s.uniqueness}.
\end{proof}
Since the mentioned non-linearities satisfy \eqref{G1} and 
\eqref{G2b}, the set $ \mathcal{G}_\lambda\cap H^1 _{r,+} $
is non-empty for every $ \lambda\geq\lambda_* $,
from \cite[Theorem~2]{BBGM07} or 
\cite[Theorem~1.1]{GG17}.
%\nocite{Kwo89}
\bibliographystyle{amsplain}
\def\cprime{$'$} \def\cprime{$'$} \def\cprime{$'$} \def\cprime{$'$}
  \def\cprime{$'$} \def\cprime{$'$} \def\cprime{$'$} \def\cprime{$'$}
  \def\cprime{$'$} \def\polhk#1{\setbox0=\hbox{#1}{\ooalign{\hidewidth
  \lower1.5ex\hbox{`}\hidewidth\crcr\unhbox0}}} \def\cprime{$'$}
  \def\cprime{$'$} \def\cprime{$'$}
\providecommand{\bysame}{\leavevmode\hbox to3em{\hrulefill}\thinspace}
\providecommand{\MR}{\relax\ifhmode\unskip\space\fi MR }
% \MRhref is called by the amsart/book/proc definition of \MR.
\providecommand{\MRhref}[2]{%
  \href{http://www.ams.org/mathscinet-getitem?mr=#1}{#2}
}
\providecommand{\href}[2]{#2}

%\bibliography{bibliography}

\begin{thebibliography}{10}

\bibitem{AA03}
J.~Albert and J.~A.~Pava, \emph{Existence and stability of ground-state
  solutions of a {S}chr\"odinger-{K}d{V} system}, Proc. Roy. Soc. Edinburgh
  Sect. A \textbf{133} (2003), no.~5, 987--1029. \MR{2018323 (2005f:35269)}

\bibitem{AP93}
A.~Ambrosetti and G.~Prodi, \emph{A primer of nonlinear analysis}, Cambridge
  Studies in Advanced Mathematics, vol.~34, Cambridge University Press,
  Cambridge, 1993, Corrected reprint of the 1993 original.
  \MR{MR1336591(96a:58019)}

\bibitem{BBBM10}
J.~Bellazzini, V.~Benci, C.~Bonanno, and A.~M. Micheletti, \emph{Solitons for
  the nonlinear {K}lein-{G}ordon equation}, Adv. Nonlinear Stud. \textbf{10}
  (2010), no.~2, 481--499. \MR{2656691 (2011d:35427)}

\bibitem{BBGM07}
J.~Bellazzini, V.~Benci, M.~Ghimenti, and A.~M. Micheletti, \emph{On the
  existence of the fundamental eigenvalue of an elliptic problem in {$\Bbb R\sp
  N$}}, Adv. Nonlinear Stud. \textbf{7} (2007), no.~3, 439--458. \MR{2340279
  (2008g:35048)}

\bibitem{BL83a}
H.~Berestycki and P.-L. Lions, \emph{Nonlinear scalar field equations. {I}.
  {E}xistence of a ground state}, Arch. Rational Mech. Anal. \textbf{82}
  (1983), no.~4, 313--345. \MR{MR695535 (84h:35054a)}

\bibitem{Bha15}
S.~Bhattarai, \emph{Stability of solitary-wave solutions of coupled {NLS}
  equations with power-type nonlinearities}, Adv. Nonlinear Anal. \textbf{4}
  (2015), no.~2, 73--90. \MR{3341458}

\bibitem{Bye00}
J.~Byeon, \emph{Effect of symmetry to the structure of positive solutions
  in nonlinear elliptic problems}, J. Differential Equations
  \textbf{163} (2000), pp.~429--474.
    
\bibitem{BJM09}
J.~Byeon, L.~Jeanjean, and M.~Mari{\c{s}}, \emph{Symmetry and monotonicity of
  least energy solutions}, Calc. Var. Partial Differential Equations
  \textbf{36} (2009), no.~4, 481--492. \MR{2558325 (2011f:35105)}

\bibitem{CL82}
T.~Cazenave and P.-L. Lions, \emph{Orbital stability of standing waves for some
  nonlinear {S}chr\"odinger equations}, Comm. Math. Phys. \textbf{85} (1982),
  no.~4, 549--561. \MR{MR677997 (84i:81015)}

\bibitem{DdPG13}
J.~D{\'a}vila, M.~del Pino, and I.~Guerra, \emph{Non-uniqueness of positive
  ground states of non-linear {S}chr\"odinger equations}, Proc. Lond. Math.
  Soc. (3) \textbf{106} (2013), no.~2, 318--344. \MR{3021464}

\bibitem{Gar12}
D.~Garrisi, \emph{On the orbital stability of standing-waves solutions to a
  coupled non-linear {K}lein-{G}ordon equation}, Adv. Nonlinear Stud.
  \textbf{12} (2012), no.~3, 639--658.

\bibitem{GG17}
D.~Garrisi and V.~Georgiev, \emph{Orbital stability and uniqueness of the
  ground state for the non-linear {S}chr\"odinger equation in dimension one},
  Discrete Contin. Dyn. Syst. \textbf{37} (2017), no.~8, 4309--4328.
  \MR{3642266}

\bibitem{GJ16}
T.~Gou and L.~Jeanjean, \emph{Existence and orbital stability of standing waves
  for nonlinear {S}chr\"odinger systems}, Nonlinear Anal. \textbf{144} (2016),
  10--22. \MR{3534090}

\bibitem{Iko14}
N.~Ikoma, \emph{Compactness of minimizing sequences in nonlinear
  {S}chr\"odinger systems under multiconstraint conditions}, Adv. Nonlinear
  Stud. \textbf{14} (2014), no.~1, 115--136. \MR{3158981}

\bibitem{Lio84a}
P.-L. Lions, \emph{The concentration-compactness principle in the calculus of
  variations. {T}he locally compact case. {I}}, Ann. Inst. H. Poincar\'e Anal.
  Non Lin\'eaire \textbf{1} (1984), no.~2, 109--145. \MR{778970 (87e:49035a)}

\bibitem{Lio84b}
\bysame, \emph{The concentration-compactness principle in the calculus of
  variations. {T}he locally compact case. {II}}, Ann. Inst. H. Poincar\'e Anal.
  Non Lin\'eaire \textbf{1} (1984), no.~4, 223--283. \MR{778974 (87e:49035b)}

\bibitem{LNW16}
C.~Liu, N.~V. Nguyen, and Z.~Wang, \emph{Existence and stability of solitary
  waves of an {$m$}-coupled nonlinear {S}chr\"odinger system}, J. Math. Study
  \textbf{49} (2016), no.~2, 132--148. \MR{3518232}

\bibitem{MS87}
K.~McLeod and J.~Serrin, \emph{Uniqueness of positive radial solutions of
  {$\Delta u+f(u)=0$} in {${\bf R}\sp n$}}, Arch. Rational Mech. Anal.
  \textbf{99} (1987), no.~2, 115--145. \MR{886933 (88c:35057)}

\bibitem{NW11}
N.~V. Nguyen and Z.~Wang, \emph{Orbital stability of solitary waves for a
  nonlinear {S}chr\"odinger system}, Adv. Differential Equations \textbf{16}
  (2011), no.~9-10, 977--1000. \MR{2850761}

\bibitem{Oht96}
M.~Ohta, \emph{Stability of solitary waves for coupled nonlinear
  {S}chr\"odinger equations}, Nonlinear Anal. \textbf{26} (1996), no.~5,
  933--939. \MR{1362765 (96k:35170)}

\bibitem{Shi13}
M.~{Shibata}, \emph{{A new rearrangement inequality and its application for
  L\^{}2-constraint minimizing problems}}, ArXiv e-prints (2013).

\bibitem{Shi14}
M.~Shibata, \emph{Stable standing waves of nonlinear {S}chr\"odinger equations
  with a general nonlinear term}, Manuscripta Math. \textbf{143} (2014),
  no.~1-2, 221--237. \MR{3147450}

\bibitem{VK73}
N.~G. Vakhitov and A.~A. Kolokolov, \emph{Stationary solutions of the wave
  equation in a medium with nonlinearity saturation}, Radiophysics and Quantum
  Electronics \textbf{16} (1973), no.~7, 783--789.

\bibitem{Wei85}
M.~I. Weinstein, \emph{Modulational stability of ground states of nonlinear
  {S}chr\"odinger equations}, SIAM J. Math. Anal. \textbf{16} (1985), no.~3,
  472--491. \MR{783974}

\end{thebibliography}
\end{document}